  \def\<{{\langle}}
  \def\>{{\rangle}}
  \def\note#1{{}}
  \def\note#1{}
\newcommand{\Nilp}{\normalfont\mbox{Nilp}\,}
\def\Spec{{\rm Spec}}
\def\Max{{\rm Max}}
  \def\beq{\begin{equation}}
  \def\eeq{\end{equation}}
\newtheorem{theorem}{Theorem}[section]
\newtheorem{lemma}[theorem]{Lemma}
\newtheorem{corollary}[theorem]{Corollary}
\theoremstyle{definition}
\newtheorem{remark}[theorem]{Remark}
\newtheorem{example}[theorem]{Example}
\theoremstyle{Definition and Notation}
\begin{document}
\bibliographystyle{amsplain}

%\date{}

\title[Amalgamated algebra extensions defined by RVN and SFT ...]{Amalgamated algebra extensions defined by
Von Neumann regular and SFT conditions}

\author{Khalid Louartiti}
\address{Khalid Louartiti\\Department of Mathematics, Faculty of Science and Technology of Fez, Box 2202, University S.M. Ben Abdellah Fez, Morocco.
$$ E-mail\ address:\ lokha2000@hotmail.com$$}

\author{Najib Mahdou}
\address{Najib Mahdou\\Department of Mathematics, Faculty of Science and Technology of Fez, Box 2202, University S.M. Ben Abdellah Fez, Morocco.
 $$E-mail\  address:\ mahdou@hotmail.com$$}

\keywords{ Von Neumann regular ring, SFT ring, amalgamated
algebras along an ideal.}

\subjclass[2000]{13D05, 13D02}

\begin{abstract}
Let $f:A\rightarrow B$ be a ring homomorphism and let $J$ be an
ideal of $B$. In this paper, we characterize $R\bowtie^fJ$ to be
Von Neumann regular ring and SFT ring, respectively.
\end{abstract}

\maketitle
%%%%%%%%%%%%%%%%%%%%%%%%%%%%
%%%%%%%%%%%%%%%%%%%%%%%%%%%%%%%%%%%%%%%%%%%%%%%%%%%%%%%%%%%%%%%%%%%%%%
%%Introduction%%%%%%%%%%%%%%%%%%%%%%%%%%%%%%%%%%%%%%%%%%%%%%

%%%%%%%%%%%%%%%%%%%%%%%%%%%%%%%%%%%%%%%%%%%%%%%%%%%%%%%
%%%%%%%%%%%%%%%%%%%%%%%%%%%%%%%%%%%%%%%%%%%%%%%%%%%%%%%%

 \section{Introduction}
Throughout this paper all rings are assumed to be commutative with
identity element and the dimension of a ring means its Krull
dimension.

Let $A$ and $B$ be two rings, let $J$ be an ideal of $B$ and let
$f: A\rightarrow B$ be a ring homomorphism. In this setting, we
can consider the following subring of $A\times B$:
\begin{center} $A\bowtie^{f}J: =\{(a,f(a)+j)\mid a\in A,j\in
J\}$\end{center} called \emph{the amalgamation of $A$ with $B$
along $J$ with respect to $f$} (introduced and studied by D'Anna,
 Finacchiaro, and  Fontana in \cite{AFF1, AFF2}). This
construction is a generalization of \emph{the amalgamated
duplication of a ring along an ideal} (introduced and studied by
D'Anna and Fontana in \cite{A, AF1, AF2}) and denoted by $A
\bowtie I$. Moreover, other classical constructions (such as the
$A+XB[X]$, $A+XB[[X]]$, and the $D+M$ constructions) can be
studied as particular cases of the amalgamation (\cite[Examples
2.5 and 2.6]{AFF1}) and other classical constructions, such as the
Nagata's idealization (cf. \cite[page 2]{Nagata}), and the CPI
extensions (in the sense of Boisen and Sheldon \cite{Boisen}) are
strictly related to it (\cite[Example 2.7 and Remark 2.8]{AFF1}).

On the other hand, the amalgamation $A\bowtie^f J$ is related to a
construction proposed by Anderson in \cite{Anderson0} and
motivated by a classical construction due to Dorroh \cite{Dorroh},
concerning the embedding of a ring without identity in a ring with
identity. An ample introduction on the genesis of the notion of
amalgamation is given in \cite[Section 2]{AFF1}. Also, the authors
consider the iteration of the amalgamation process, giving some
geometrical applications of it.

 One of the key tools for studying $A\bowtie^fJ$
is based on the fact that the amalgamation can be studied in the
frame of pullback constructions \cite[Section 4]{AFF1}. This point
of view allows the authors in \cite{AFF1,AFF2} to provide an ample
description of various properties of $A\bowtie^f J$, in connection
with the properties of A, J and f. Namely, in \cite{AFF1}, the
authors  studied the basic properties of this construction (e.g.,
characterizations for $A \bowtie^f J$ to be a Noetherian ring, an
integral domain, a reduced ring) and they  characterized those
distinguished pullbacks that can be expressed as an amalgamation.
Moreover, in \cite{AFF2}, they  pursue the investigation on the
structure of the rings of the form $A\bowtie^f J$, with particular
attention to the prime spectrum, to the chain properties and to
the Krull dimension.\\

Recall that a  ring $R$ is called \emph{Von Neumann regular} if
for each  $a\in R$, there exists $x\in R$ such that $axa=a$. It is
proved in \cite[Theorem 2.1]{chhiti} that, for an ideal $I$ of
$R$, $R\bowtie I$ is Von Neumann regular if and only if $R$ is Von
Neumann regular. In section 2, we extend this result to
amalgamated algebra along an ideal. \\

An ideal $I$ is called an $SFT$-ideal if there exists a naturel
number $k$ and a finitely generated ideal $J \subseteq I$ such
that $a^{k} \in  J$
for each $a \in I$. An $SFT$ ring is a ring in wich every ideal is an $SFT$-ideal.\\
 In \cite{A1}, Arnold studies the Krull dimension of a power series ring $R[[x]]$ over a ring $R$ and showed that the dimension is infinite
 unless $R$ is an $SFT$ ring, which forces us to consider only $SFT$ rings when we study finite-dimensional power series extensions.

 For any ring $A$ with finite Krull dimension, we have:\\
$A$ Noetherian $\Longrightarrow  dim A[[X]] <  \infty
\Longrightarrow A$ $SFT$ ring.

  One important family of $SFT$ rings is that of $SFT$ Pr\"ufer domains, which are also called generalized Dedekind domains. The beautiful
  discovery of Arnold is that, for $D$ a finite-dimensional $SFT$-Pr\"ufer domain, $dim D[[x_{1},...,x_{n}]] = n(dim D) + 1$, and
  so $D[[x_{1},...,x_{n}]]$ is an $SFT$ ring \cite{A2}. In 2007, Kang and Park \cite[Theorem 10]{KP} extend Arnold's result to the
  infinite-dimensional case, thus proving that over an infinite-dimensional $SFT$ Pr\"ufer domain $D$, the power series
  ring $D[[x_{1},...,x_{n}]]$ is an $SFT$ ring. In 2010, Park \cite[Theorem 2.4]{P} shows that, if $R$ is an $m$-dimensional $SFT$ globalized
  pseudo-valuation domain, then $dim R[[x_{1},...,x_{n}]] = mn +
  1$ or $mn + n$.

  $SFT$ rings are similar to Noetherian rings and they have many nice properties. It had been a long-standing open question if the
  power series extension of an $SFT$ ring is also an $SFT$ ring. Coykendall's counterexample to this appears in \cite{coy:SFT}. Remark that
  these rings are coherent. Coykendall prove also that a ring $R$ is $SFT$ if and only if each prime
ideal is $SFT$ (\cite{coy:SFT}). \\

In this work, we characterize $R\bowtie^fJ$ to be a Von Neumann
regular ring and $SFT$ ring, respectively.  Our results generate
new and original examples which enrichy the current literature
with new
families of Von Neumann regular rings and $SFT$ rings. \\

\section{ Von Neuman regular amalgamated algebra along an ideal}

This section characterize the amalgamated algebra along an ideal
$R\bowtie^fJ$ to be a Von Neumann regular ring. The main result
(Theorem~\ref{RVN}) enriches the literature with original examples
of Von Neumann regular rings. \\

%%%%%%%%%%%%%%%%%%%%%%%%%%%%%%%%%%%%%%%%%%%%%%%%%%%%%%%%%%%%%%%%%%%%%%
%%%%%%%%%%%%%%%%%%%%%%%%%%%%%%%%%%%%%%%%%%%%%%%%%%%%%%%%%%%%%%%%%%%%%%

\begin{theorem}\label{RVN}
Let $A$ and $B$ be two rings, $J$ an ideal of $B$ and let
$f:A\rightarrow B $ be a ring homomorphism. Then, $A\bowtie^fJ$ is
a Von Neumann regular ring if and only if the following statements
holds:
\begin{enumerate}
    \item $A$ is a Von Neumann regular ring.
    \item $\Nilp(B)\cap J=\{0\}$.
    \item Every prime ideal of $B$ which don't contains $J$ is
    maximal.
\end{enumerate}
\end{theorem}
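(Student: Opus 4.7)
My plan is to reduce the theorem to the standard equivalence ``$R := A \bowtie^f J$ is Von Neumann regular iff $R$ is reduced and $\dim R = 0$,'' and to control each of these two conditions separately. Two tools will do the bookkeeping. First, the projection $\pi_1 : R \to A$, $(a, f(a)+j) \mapsto a$, is a surjective ring homomorphism with kernel $\{0\} \times J$, so $R/(\{0\} \times J) \cong A$. Second, $\pi_2 : R \to f(A)+J$, $(a, f(a)+j) \mapsto f(a)+j$, is surjective, and the description of $\Spec(R)$ from \cite{AFF2} tells us that every prime of $R$ has one of two shapes: $P^f := \{(p, f(p)+j) : p \in P,\ j \in J\}$ for $P \in \Spec(A)$, or $\bar Q^f := \{(a, f(a)+j) : f(a)+j \in Q\}$ for $Q \in \Spec(B)$ with $J \not\subseteq Q$.

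For the forward direction, suppose $R$ is Von Neumann regular. Since $A \cong R/(\{0\} \times J)$ is a quotient of a Von Neumann regular ring, (1) follows. If $j \in \Nilp(B) \cap J$, then $(0,j)$ is a nilpotent element of $R$, hence zero by reducedness, yielding (2). For (3), fix $Q \in \Spec(B)$ with $J \not\subseteq Q$; the corresponding prime $\bar Q^f$ of $R$ must be maximal since $\dim R = 0$. If $Q$ admitted a strictly larger prime $Q'$ in $B$, then either $J \not\subseteq Q'$, in which case $\bar Q^f \subsetneq \overline{Q'}^f$, or $J \subseteq Q'$, in which case $\bar Q^f \subsetneq (f^{-1}(Q'))^f$; the latter strict inclusion holds because $\{0\} \times J$ lies in $(f^{-1}(Q'))^f$ but not in $\bar Q^f$. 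Either possibility contradicts the maximality of $\bar Q^f$, so $Q$ is maximal in $B$.

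For the converse, assume (1), (2), (3). Reducedness of $R$ is immediate: if $(a, f(a)+j)^n = 0$ then $a^n = 0$, whence $a = 0$ by (1), and then $j^n = 0$, whence $j = 0$ by (2). To see $\dim R = 0$, let $\mathfrak{p}$ be a prime of $R$. If $\mathfrak{p} = P^f$, then $P$ is maximal in $A$ by (1), and no $\bar Q^f$ can contain $P^f$ (because $\{0\} \times J \subseteq P^f$ while $\{0\} \times J \not\subseteq \bar Q^f$ when $J \not\subseteq Q$); so $P^f$ is maximal. If $\mathfrak{p} = \bar Q^f$, then $\pi_2$ induces $R/\bar Q^f \hookrightarrow B/Q$ with image $(f(A)+J+Q)/Q$, and since $Q$ is maximal in $B$ by (3) with $J \not\subseteq Q$, we have $J + Q = B$, so the image equals $B/Q$, a field. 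Hence $\bar Q^f$ is maximal.

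The main obstacle is to keep track of how primes of type $P^f$ and type $\bar Q^f$ compare: one must argue that no type $\bar Q^f$ prime can sit above a type $P^f$ prime, and must handle the case $Q \subsetneq Q'$ with $J \subseteq Q'$ in the forward direction by producing the type $P^f$ prime $(f^{-1}(Q'))^f$ strictly containing $\bar Q^f$. The unifying observation is that $\{0\} \times J \subseteq \bar Q^f$ if and only if $J \subseteq Q$, so the distinction between the two types of primes is detected entirely by the ideal $\{0\} \times J$.
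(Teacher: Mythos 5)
Your proof is correct and follows essentially the same route as the paper: both arguments rest on the characterization ``Von Neumann regular $=$ reduced $+$ zero-dimensional'' together with the description of $\Spec(A\bowtie^fJ)$ via the two families $P'^f$ and $\overline{Q}^f$ from \cite{AFF2}. The only difference is one of packaging: where the paper cites \cite[Proposition 5.4]{AFF1} for reducedness and the $\Spec$/$\Max$ correspondences of \cite[Proposition 2.6]{AFF2} for the dimension count, you verify these facts by direct elementary computation (e.g.\ the observation that $\{0\}\times J\subseteq\overline{Q}^f$ iff $J\subseteq Q$), which makes your version slightly more self-contained but does not change the underlying argument.
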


%%%%%%%%%%%%%%%%%%%%%%%%%%%%%%%%%%%%%%%%%%%%%%%%%%%%%%%%%%%%%%%%%%%%%%
%%%%%%%%%%%%%%%%%%%%%%%%%%%%%%%%%%%%%%%%%%%%%%%%%%%%%%%%%%%%%%%%%%%%%%

\begin{proof} For each ideals $P$ and $Q$ of $A$ and $B$
respectively, set $P'^f:=P\bowtie^fJ:=\{(p,f(p)+j)\mid p\in P,
j\in J\}$ and $\overline{Q}^f:=\{(a,f(a)+j)\mid a\in A, j\in J,
f(a)+j\in Q\}$.

Assume that $R\bowtie^fJ$ is a Von Neumann regular ring. Then, it
is reduced. Hence, by \cite[Proposition 5.4]{AFF1}, $A$ is reduced
and $\Nilp(B)\cap J=\{0\}$. Let $P$ be a prime ideal of $A$. Then,
by \cite[Proposition 2.6]{AFF2}, $P'^f$ is a prime ideal of
$R\bowtie^fJ$. Hence, it is maximal since $A\bowtie^fJ$ is Von
Neumann regular. Consequently, by \cite[Proposition 2.6]{AFF2},
$P$ is a maximal ideal of $A$. Hence, $A$ is a Von Neumann regular
ring. Thus, $(1)$ and $(2)$ hold. Let $Q$ be a prime ideal of $B$
not containing $J$. By \cite[Proposition 2.6]{AFF2},
$\overline{Q}^f$ is a prime ideal of $A\bowtie^fJ$, and so
maximal. Then, also by \cite[Proposition 2.6]{AFF2}, $Q$ is a
maximal ideal of $B$. Hence, $(3)$ holds.

Conversely, suppose that $(1)$, $(2)$ and $(3)$ hold. By
\cite[Proposition 5.4]{AFF1}, the statements $(1)$ and $(2)$ imply
that $A\bowtie^fJ$ is reduced. Moreover, from \cite[Proposition
2.6 (3)]{AFF2}, $\Spec(A\bowtie^fJ)=\{P'^f\mid P\in \Spec(A)\}\cup
\{\overline{Q}^f\mid Q\in \Spec(B), I\not\subset J\}$ and
$\Max(A\bowtie^fJ)=\{P'^f\mid P\in \Max(A)\}\cup
\{\overline{Q}^f\mid Q\in \Max(B), I\not\subset J\}$. Since $A$ is
Von Neumann regular, then $\Spec(A)=\Max(A)$. On the other hand,
$(3)$ means  that $\{Q\in \Spec(B), I\not\subset J\}=\{Q\in
\Max(B), I\not\subset J\}$. Hence,
$\Spec(A\bowtie^fJ)=\Max(A\bowtie^fJ)$. Consequently,
$A\bowtie^fJ$ is Von Neumann regular, as desired.
\end{proof}

%%%%%%%%%%%%%%%%%%%%%%%%%%%%%%%%%%%%%%%%%%%%%%%%%%%%%%%%%%%%%%%%%%%%%%
%%%%%%%%%%%%%%%%%%%%%%%%%%%%%%%%%%%%%%%%%%%%%%%%%%%%%%%%%%%%%%%%%%%%%%

\begin{remark} If $A$ is Von Neumann regular ring and $I$ is an
ideal of $A$ then $\Nilp(A)\cap I=\{0\}\cap I=\{0\}$ and every
prime ideal (in particular these which doesn't contains $I$) is
maximal. Hence, Theorem \ref{RVN} is clearly a generalization of
\cite[Theorem 2.1.]{chhiti}.
\end{remark}

%%%%%%%%%%%%%%%%%%%%%%%%%%%%%%%%%%%%%%%%%%%%%%%%%%%%%%%%%%%%%%%%%%%%%%
%%%%%%%%%%%%%%%%%%%%%%%%%%%%%%%%%%%%%%%%%%%%%%%%%%%%%%%%%%%%%%%%%%%%%%

\begin{corollary}\label{cor1}
Let $A$ and $B$ be two rings, $J$ an ideal of $B$ and let
$f:A\rightarrow B $ be a ring homomorphism. If $A$ and $B$ are
both Von Neumann regular rings then so is $A\bowtie^fJ$.
\end{corollary}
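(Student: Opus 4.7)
The plan is to apply Theorem~\ref{RVN} directly by verifying its three hypotheses under the assumption that both $A$ and $B$ are Von Neumann regular. Condition $(1)$ is immediate since $A$ is Von Neumann regular by assumption. For condition $(2)$, recall that any Von Neumann regular ring is reduced: if $b \in B$ satisfies $b^n = 0$ and $bxb = b$ for some $x \in B$, then iteration of $b = bxb$ forces $b = 0$. Hence $\Nilp(B) = \{0\}$, and in particular $\Nilp(B) \cap J = \{0\}$.

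For condition $(3)$, I would invoke the standard fact that in a Von Neumann regular ring every prime ideal is maximal; equivalently, the Krull dimension of $B$ is zero. Thus every prime ideal of $B$ is maximal, and a fortiori every prime ideal of $B$ that does not contain $J$ is maximal.

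With $(1)$, $(2)$, $(3)$ all verified, Theorem~\ref{RVN} yields that $A\bowtie^fJ$ is Von Neumann regular. There is no real obstacle here; the corollary is essentially a specialization of the theorem to the case where the hypotheses on $B$ are satisfied trivially. If desired, one could state the fact that Von Neumann regular rings are zero-dimensional and reduced as a brief preliminary remark, but otherwise the proof reduces to a one-line verification followed by a citation of Theorem~\ref{RVN}.
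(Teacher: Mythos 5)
Your proposal is correct and takes essentially the same route as the paper, which simply states that the corollary follows immediately from Theorem~\ref{RVN}; you have merely spelled out the standard facts (Von Neumann regular rings are reduced and zero-dimensional) that make conditions $(1)$--$(3)$ of the theorem hold. No issues.
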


\begin{proof} Follows immediately from Theorem \ref{RVN}
\end{proof}

%%%%%%%%%%%%%%%%%%%%%%%%%%%%%%%%%%%%%%%%%%%%%%%%%%%%%%%%%%%%%%%%%%%%%%
%%%%%%%%%%%%%%%%%%%%%%%%%%%%%%%%%%%%%%%%%%%%%%%%%%%%%%%%%%%%%%%%%%%%%%

Recall that a ring $R$ is called Boolean ring if $x^2=x$ for each
$x\in R$. Boolean rings are Von Neumann regular.

\begin{example}
Consider the ring  $B=\prod_{i=1}^{n}K_{i}$ with $K_{i}=\{0;1\}$
and $A$ the  subring of stationary sequences of $B$. Set
$J=\bigoplus _{i=1}^{n}K_{i}$ which is an ideal of $B$, and let
$\iota: A \longrightarrow B$ be the canonical embedding of $A$
into $B$. Then $A\bowtie ^{\iota}J$ is a  Von Neumann regular
ring.
\end{example}

\begin{proof}
Follows from Corollary \ref{cor1} since $B$ and $A$ are both
Boolean rings, and then Von Neumann regular rings.
\end{proof}

%%%%%%%%%%%%%%%%%%%%%%%%%%%%%%%%%%%%%%%%%%%%%%%%%%%%%%%%%%%%%%%%%%%%%%
%%%%%%%%%%%%%%%%%%%%%%%%%%%%%%%%%%%%%%%%%%%%%%%%%%%%%%%%%%%%%%%%%%%%%%

It is well known that semisimple rings coincide with Noetherian
Von Neumann rings. Hence, we have the following corollary.\\

\begin{corollary}\label{Cor2}
Let $A$ and $B$ be two rings, $J$ an ideal of $B$ and let
$f:A\rightarrow B $ be a ring homomorphism. Then, $A\bowtie^fJ$ is
a semisimple ring if and only if the following statements hold:
\begin{enumerate}
    \item $A$ is a semisimple  ring.
    \item $\Nilp(B)\cap J=\{0\}$.
    \item Every prime ideal of $B$ which doesn't contains $J$ is
    maximal.
    \item $f(A)+J$ is a Noetherian ring.
\end{enumerate}

In particular, if $A$ and $B$ are both semisimple and the ring
homomorphism $\overline{f} : A \rightarrow B/J$ is finite, then
$A\bowtie^fJ$ is semisimple.
\end{corollary}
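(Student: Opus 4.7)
The strategy is to use the standard fact, recalled in the paper just above the statement, that a commutative ring is semisimple if and only if it is both Noetherian and Von Neumann regular. This reduces the proof to combining Theorem~\ref{RVN} above with the characterization of Noetherianity for amalgamated algebras given in \cite{AFF1}, according to which $A\bowtie^f J$ is a Noetherian ring if and only if $A$ is Noetherian and $f(A)+J$ is Noetherian.

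For the forward implication, assume $A\bowtie^f J$ is semisimple. On the one hand it is Von Neumann regular, so Theorem~\ref{RVN} delivers (2), (3), and the Von Neumann regularity of $A$. On the other hand it is Noetherian, so the Noetherian characterization of \cite{AFF1} delivers (4) together with the Noetherianity of $A$. Combining ``$A$ is Von Neumann regular'' with ``$A$ is Noetherian'' yields (1). Conversely, assuming (1)--(4), the hypothesis that $A$ is semisimple implies it is Von Neumann regular, so (1)--(3) and Theorem~\ref{RVN} give that $A\bowtie^f J$ is Von Neumann regular; similarly, (1) and (4) combined with \cite{AFF1} give that $A\bowtie^f J$ is Noetherian. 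Hence it is semisimple.

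For the ``in particular'' statement, suppose that $A$ and $B$ are both semisimple and $\overline{f}\colon A\to B/J$ is finite. Conditions (1)--(3) are immediate from the semisimplicity of $B$: indeed $\Nilp(B)=\{0\}$ and $\Spec(B)=\Max(B)$. The only delicate point is (4). The finiteness of $\overline{f}$ produces elements $\overline{b_1},\dots,\overline{b_n}\in B/J$ generating $B/J$ as an $A$-module; lifting them to $b_1,\dots,b_n\in B$ gives
\[
B \;=\; f(A)b_1+\cdots+f(A)b_n+J \;\subseteq\; (f(A)+J)b_1+\cdots+(f(A)+J)b_n+(f(A)+J),
\]
so $B$ is finitely generated as an $(f(A)+J)$-module. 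Since $B$ is Noetherian (being semisimple), the Eakin--Nagata subring theorem then forces $f(A)+J$ to be Noetherian, and the main equivalence applies.

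I expect the only real technical obstacle to be this last verification. The Von Neumann regular/Noetherian decomposition of the main equivalence is straightforward bookkeeping on top of Theorem~\ref{RVN} and the Noetherian characterization of \cite{AFF1}, but deducing Noetherianity of $f(A)+J$ from mere finiteness of $\overline{f}$ is not formal: it needs the auxiliary observation that $B$ is finite over $f(A)+J$, together with the Eakin--Nagata theorem to descend Noetherianity from $B$ to the subring $f(A)+J$.
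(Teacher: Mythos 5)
Your proof of the main equivalence is correct and is essentially the paper's own argument: both decompose semisimplicity as ``Noetherian plus Von Neumann regular'' and combine Theorem~\ref{RVN} with the characterization of Noetherianity of $A\bowtie^f J$ from \cite[Proposition 5.6]{AFF1}. Where you genuinely diverge is in the ``in particular'' statement. The paper disposes of it by citing two ready-made results: Corollary~\ref{cor1} for Von Neumann regularity of $A\bowtie^f J$ when $A$ and $B$ are both Von Neumann regular, and \cite[Proposition 5.8]{AFF1}, which asserts that when $\overline{f}:A\to B/J$ is finite, $A\bowtie^f J$ is Noetherian if and only if $A$ is. You instead verify conditions (1)--(4) directly, the only nontrivial one being (4): you note that finiteness of $\overline{f}$ makes $B$ a finitely generated module over the subring $f(A)+J$ (lift generators of $B/J$ and adjoin $1$), and then invoke the Eakin--Nagata theorem to descend Noetherianity from the semisimple, hence Noetherian, ring $B$ to $f(A)+J$. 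This is correct and routes the whole special case through the equivalence you just proved, at the price of importing Eakin--Nagata, a nontrivial theorem that the paper's citation of \cite[Proposition 5.8]{AFF1} sidesteps; conversely, your argument is more self-contained and makes explicit exactly which of the four conditions the hypotheses are buying. Either way the statement is established.
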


\begin{proof} By \cite[Proposition 5.6]{AFF1}, $A\bowtie^fJ$ is Noetherian if and only if $A$
and $f(A)+J$ are Noetherian. Then, the desired equivalence follows
directly from Theorem \ref{RVN}.

The last particular statement follows from \cite[Proposition
5.8]{AFF1} and Corollary \ref{cor1}.
\end{proof}

\bigskip

%%%%%%%%%%%%%%%%%%%%%%%%%%%%%%%%%%%%%%%%%%%%%%%%%%%%%%%%%
%%%%%%%%%%%%%%%%%%%%%%%%%%%%%%%%%%%%%%%%%%%%%%%%%%%%%%%%%%%%%%%%%%%%%%
%%%%%%%%%%%%%%%%%%%%%%%%%%%%%%%%%%%%%%%%%%%%%%%%%%%%%%%%%%%%%%%%%%%%%%

\section{ SFT amalgamated algebra along an ideal}

The main result of this section characterize the amalgamated
algebra along an ideal $R\bowtie^fJ$ to be an $SFT$ ring. This
result (Theorem~\ref{theorem 2}) enriches the literature
with original examples of $SFT$ rings. \\

\begin{theorem}\label{theorem 2}
Let $A$ and $B$ be two rings, $J$ an ideal of $B$ and let $f: A
\rightarrow B $ be a ring homomorphism. Then, $A\bowtie ^{f}J$ is
an $SFT$ ring if and only if $A$ and $f(A) + J$ are both $SFT$
rings.
\end{theorem}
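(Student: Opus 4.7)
The plan is to treat the two directions separately. For the forward direction I will use the two natural surjections $\pi_A\colon A\bowtie^f J\to A$, $(a,f(a)+j)\mapsto a$, and $\pi_B\colon A\bowtie^f J\to f(A)+J$, $(a,f(a)+j)\mapsto f(a)+j$, together with the elementary fact that SFT-ness descends to quotients: if $K\supseteq I$ is SFT in $R$ with finitely generated subideal $L\subseteq K$ such that $a^n\in L$ for every $a\in K$, then $\bar L\subseteq K/I$ is a certificate of the same kind in $R/I$. Thus $A\bowtie^f J$ SFT forces both $A$ and $f(A)+J$ to be SFT.

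For the converse I will appeal to Coykendall's theorem (recalled in the introduction): $A\bowtie^f J$ is SFT iff every prime of $A\bowtie^f J$ is SFT. By \cite[Proposition 2.6]{AFF2}, the primes are either $P'^f=\pi_A^{-1}(P)$ for $P\in\Spec(A)$ or $\overline Q^f=\pi_B^{-1}(Q\cap(f(A)+J))$ for $Q\in\Spec(B)$ with $Q\not\supset J$; both are preimages, under a surjection of $A\bowtie^f J$ onto an SFT ring, of an SFT prime. The central step is therefore a \emph{preimage lemma}: if $\phi\colon R\to S$ is a surjective ring map, $K\subseteq S$ is an SFT ideal with certificate $(\ell_1,\dots,\ell_t;n)$, and $\ker\phi$ is an SFT ideal of $R$ with certificate $(k_1,\dots,k_p;m)$, then $\phi^{-1}(K)$ is SFT in $R$. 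To see this, fix lifts $\tilde\ell_i\in\phi^{-1}(K)$ of the $\ell_i$; for $x\in\phi^{-1}(K)$, write $\phi(x)^n=\sum\gamma_i\ell_i$, lift $\gamma_i$ to $\tilde\gamma_i\in R$, and set $y=\sum\tilde\gamma_i\tilde\ell_i$, so $x^n-y\in\ker\phi$. Then $(x^n-y)^m\in(k_j)$; expanding binomially, every term except $x^{nm}$ contains a factor $y^i$ with $i\geq 1$ and hence lies in $(\tilde\ell_j)$, so $x^{nm}\in(\tilde\ell_1,\dots,\tilde\ell_t,k_1,\dots,k_p)\subseteq\phi^{-1}(K)$.

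To apply this lemma in both cases I must verify that $\ker\pi_A$ and $\ker\pi_B$ are SFT ideals of $A\bowtie^f J$. The former equals $\{(0,j):j\in J\}$; since $f(A)+J$ is SFT, $J$ (an ideal of $f(A)+J$) has an SFT certificate $(j_1,\dots,j_u;k')$, which lifts to $((0,j_1),\dots,(0,j_u);k')$ for $\ker\pi_A$---writing $j^{k'}=\sum\beta_\ell j_\ell$ with $\beta_\ell=f(c_\ell)+d_\ell$ (so $(c_\ell,\beta_\ell)\in A\bowtie^f J$ because $d_\ell\in J$), one checks $\sum(c_\ell,\beta_\ell)(0,j_\ell)=(0,j^{k'})$ lies in the generated ideal. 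Similarly, $\ker\pi_B=\{(a,0):a\in f^{-1}(J)\}$; since $A$ is SFT, $f^{-1}(J)\subseteq A$ has a certificate $(a_1,\dots,a_s;k)$, which lifts to $((a_1,0),\dots,(a_s,0);k)$, the pairs $(a_i,0)$ being legitimate because $a_i\in f^{-1}(J)$ forces $f(a_i)\in J$. Combined with the preimage lemma, these verifications show every prime of $A\bowtie^f J$ is SFT, and Coykendall concludes.

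The main technical obstacle I expect is the binomial-expansion bookkeeping inside the preimage lemma---keeping every cross term inside the single finitely generated ideal $(\tilde\ell_j,k_j)$ rather than allowing it to escape into something larger. Everything else reduces to routine manipulations exploiting that $J$ is an ideal of $B$, which is what guarantees products like $f(c)\cdot j$ remain in $J$ and ensures the lifted pairs actually belong to $A\bowtie^f J$.
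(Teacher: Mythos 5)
Your proof is correct, and while its skeleton matches the paper's — the forward direction via the surjections onto $A$ and $f(A)+J$ plus descent of SFT to quotients, the converse via Coykendall's criterion reducing to the two families of primes $P'^f$ and $\overline{Q}^f$ of \cite[Proposition 2.6]{AFF2} — the engine of your converse is genuinely different. The paper treats the two families by two separate ad hoc arguments: Lemma~\ref{I bowtie J} for $P'^f=P\bowtie^f J$, and a long explicit computation for $\overline{Q}^f$ that filters through three auxiliary ideals (an ideal $Q_0$ of $(f(A)+J)/J$, an ideal of $A$ contained in $f^{-1}(J)$, and $Q\cap J$ in $f(A)+J$) with a resulting exponent $k_0+k_1+k_2$. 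You instead isolate one reusable preimage lemma — if $\phi\colon R\to S$ is surjective, $K$ is SFT in $S$ with exponent $n$, and $\ker\phi$ is SFT in $R$ with exponent $m$, then $\phi^{-1}(K)$ is SFT with exponent $nm$ — and apply it uniformly, after the (correct and necessary) verifications that $\ker\pi_A=\{0\}\times J$ and $\ker\pi_B=f^{-1}(J)\times\{0\}$ inherit SFT certificates from $J\subseteq f(A)+J$ and $f^{-1}(J)\subseteq A$ respectively. Your route buys a uniform treatment of both prime families, confines all the binomial bookkeeping to a single clean statement of independent interest, and in fact subsumes the paper's Lemma~\ref{I bowtie J} as the special case $\phi=\pi_A$; the paper's route avoids naming the kernels by working entirely inside the amalgamation, but at the cost of the lengthy and rather opaque computation for $\overline{Q}^f$. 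Both arguments rest on the same external inputs (the description of $\Spec(A\bowtie^f J)$ and Coykendall's prime-ideal criterion), so nothing is lost in your version.
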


%%%%%%%%%%%%%%%%%%%%%%%%%%%%%%%%%%%%%%%%%%%%%%%%%%%%%%%%%%%%%%%%%%%%%%
%%%%%%%%%%%%%%%%%%%%%%%%%%%%%%%%%%%%%%%%%%%%%%%%%%%%%%%%%%%%%%%%%%%%%%

The proof of the theorem involves the following lemmas of
independent interest. \\

%%%%%%%%%%%%%%%%%%%%%%%%%%%%%%%%%%%%%%%%%%%%%%%%%%%%%%%%%%%%%%%%%%%%%%
%%%%%%%%%%%%%%%%%%%%%%%%%%%%%%%%%%%%%%%%%%%%%%%%%%%%%%%%%%%%%%%%%%%%%%

\begin{lemma}\label{homo image} Let $R$ be a ring and $K$ be a proper  ideal of $R$.
If $R$ is an $SFT$ ring then so is $R/K$.
\end{lemma}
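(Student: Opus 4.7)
The plan is to verify the SFT property for an arbitrary ideal of $R/K$ by lifting back to $R$. Let $\pi : R \to R/K$ denote the canonical projection. Given any ideal $\bar I$ of $R/K$, I would set $I := \pi^{-1}(\bar I)$, which is an ideal of $R$ containing $K$, and apply the hypothesis that $R$ is SFT to $I$.

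By assumption, there exist a natural number $k$ and a finitely generated ideal $J = (a_1, \ldots, a_n) \subseteq I$ such that $a^k \in J$ for every $a \in I$. I then push $J$ forward by setting $\bar J := \pi(J) = (\pi(a_1), \ldots, \pi(a_n))$; this is a finitely generated ideal of $R/K$ contained in $\bar I$, and it is the natural candidate witnessing the SFT property for $\bar I$.

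To close the argument, I would take an arbitrary element $\bar b \in \bar I$ and lift it to some $b \in I$ (possible by the very definition of $I$). Since $b \in I$, the SFT data for $I$ yields $b^k \in J$, and applying $\pi$ gives $\bar b^{\,k} = \pi(b^k) \in \pi(J) = \bar J$. Hence $\bar I$ is an SFT ideal of $R/K$ with the same exponent $k$ and the pushed-forward generators, and since $\bar I$ was arbitrary, $R/K$ is an SFT ring.

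There is no real obstacle here: the statement reduces to the observation that the SFT condition is preserved under the ideal-correspondence between $R$ and $R/K$. The only minor point to mention is that the same uniform exponent $k$ transfers to the quotient because $\pi$ is a ring homomorphism, so $\pi(b^k) = \pi(b)^k$; no subtler step is needed.
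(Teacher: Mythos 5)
Your proof is correct and follows essentially the same route as the paper: lift the ideal of $R/K$ to its preimage in $R$, apply the SFT hypothesis there, and push the finitely generated witness ideal and the exponent $k$ back down through the projection. No issues.
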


\begin{proof} Let $\mathcal{J}$ be an ideal of $R/K$.  There
exists an ideal $J$ of $R$ such that $\mathcal{J}=\overline{J}$.
Since $R$ is an $SFT$ ring there exists a finitely generated ideal
$I$ of $R$ and a positive integer $k$ such that $I\subset J$  and
$x^k\in I$ for each $x\in J$. Thus, $\overline{I}$ is a finitely
generated ideal of $R/K$, $\overline{I}\subset \mathcal{J}$  and
$\overline{x}^k\in \overline{I}$ for each $\overline{x}\in
\mathcal{J}$. Hence, $R/K$ is an $SFT$ ring, as desired.
\end{proof}

%%%%%%%%%%%%%%%%%%%%%%%%%%%%%%%%%%%%%%%%%%%%%%%%%%%%%%%%%%%%%%%%%%%%%%
%%%%%%%%%%%%%%%%%%%%%%%%%%%%%%%%%%%%%%%%%%%%%%%%%%%%%%%%%%%%%%%%%%%%%%

\begin{lemma}\label{I+J} Let $R$ be a ring. If $I$ and $J$ are two $SFT$
ideals of $R$ then so is $I+J$.
\end{lemma}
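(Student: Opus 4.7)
The plan is to use the standard binomial-expansion trick for SFT-type conditions. Let $I$ and $J$ be SFT ideals of $R$, so there exist finitely generated ideals $I_0 \subseteq I$ and $J_0 \subseteq J$ together with positive integers $k_1, k_2$ such that $a^{k_1} \in I_0$ for every $a \in I$ and $b^{k_2} \in J_0$ for every $b \in J$. The candidate ``test'' finitely generated sub-ideal of $I+J$ will be $I_0 + J_0$, which is finitely generated and contained in $I + J$.

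The key step is to choose the exponent correctly. I would set $n := k_1 + k_2 - 1$. Any element $c \in I + J$ can be written as $c = a + b$ with $a \in I$ and $b \in J$, and I expand
\[
c^n = (a+b)^n = \sum_{i=0}^{n} \binom{n}{i}\, a^{i}\, b^{n-i}.
\]
For each index $i$ in the sum, the pair of exponents $(i, n-i)$ cannot simultaneously satisfy $i < k_1$ and $n - i < k_2$, since that would force $n < k_1 + k_2 - 1$. Hence either $i \geq k_1$, in which case $a^i = a^{k_1} \cdot a^{i - k_1} \in I_0$, or $n - i \geq k_2$, in which case $b^{n-i} \in J_0$. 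In both cases the corresponding term $\binom{n}{i} a^i b^{n-i}$ lies in $I_0 + J_0$, so the whole sum does. This shows $c^n \in I_0 + J_0$ for every $c \in I + J$, proving that $I + J$ is SFT.

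I do not expect any real obstacle: the argument is essentially the observation that the SFT condition is stable under pigeonholing exponents in a binomial expansion. The only point that needs a brief remark is that $a^i \in I_0$ whenever $i \ge k_1$ (because ideals absorb multiplication by arbitrary ring elements), and the symmetric statement for $b$; after that the conclusion follows immediately.
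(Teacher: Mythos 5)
Your argument is correct and is essentially the same as the paper's: both expand $(a+b)^n$ binomially and observe that in every term either the power of $a$ reaches $k_1$ or the power of $b$ reaches $k_2$, so each term falls into $I_0+J_0$. The only (immaterial) difference is that you use the slightly sharper exponent $k_1+k_2-1$ where the paper uses $k_1+k_2$.
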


\begin{proof} Assume that $I$ and $J$ are $SFT$ ideals of $R$.
Then, there exists finitely generated ideals $I'$ and $J'$ and two
positive integers $k$ and $k'$ such that $I'\subset I$, $J'\subset
J$, $x^k\in I'$ for each $x\in I$ and $y^{k'}\in J'$ for each
$y\in J$. Clearly, $I'+J'$ is a finitely generated subideal of
$I+J$. Moreover, for each $x\in I$ and $y\in J$, we have
\begin{eqnarray*} (a+b)^{k+k'} &=&  \sum^{i=k+k'}_{i=0}C_{k+k'}^{i}a^{i}b^{k+k'-i} \\
 &=&
[\sum^{i=k}_{i=0}C_{k+k'}^{i}a^{i}b^{k-i}]b^{k'}+[\sum^{i=k+k'}_{i=k+1}C_{k+k'}^{i}a^{i-k}b^{k+k'-i}]a^{k}
 \end{eqnarray*} with $C_{k+k'}^{i}
=\displaystyle\frac{(k+k')!}{i!(k+k'-i)!}$. Hence,
$(a+b)^{k+k'}\in I'+J'$. Consequently, $I+J$ is an $SFT$ ideal of
$R$.
\end{proof}

%%%%%%%%%%%%%%%%%%%%%%%%%%%%%%%%%%%%%%%%%%%%%%%%%%%%%%%%%%%%%%%%%%%%%%
%%%%%%%%%%%%%%%%%%%%%%%%%%%%%%%%%%%%%%%%%%%%%%%%%%%%%%%%%%%%%%%%%%%%%%

\begin{lemma}\label{I bowtie J} Let $A$ and $B$ be two rings, $J$ an ideal of $B$,
$f:A\rightarrow B $ be a ring homomorphism and let $I$ be an ideal
of $A$. If  $I\bowtie^fJ$ is an $SFT$ ideal of $A\bowtie^fJ$ then
$I$ is an $SFT$ ideal of $A$ with equivalence if $J$ is an $SFT$
ideal of $f(A)+J$.
\end{lemma}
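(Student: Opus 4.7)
The plan is to prove the two implications separately. The ``only if'' direction can be handled with a short projection argument, while the converse (which is where the additional SFT hypothesis on $J$ enters) will require a binomial expansion in $A\bowtie^fJ$ in the spirit of Lemma~\ref{I+J}, with a small twist reflecting that $J'$ is only an ideal of $f(A)+J$ rather than of $B$.

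For the forward direction, I would use the canonical projection $\pi:A\bowtie^fJ\to A$, $(a,f(a)+j)\mapsto a$, which is a surjective ring homomorphism with $\pi(I\bowtie^fJ)=I$. Given a finitely generated subideal $H=\bigl((a_1,f(a_1)+j_1),\dots,(a_n,f(a_n)+j_n)\bigr)_{A\bowtie^fJ}$ of $I\bowtie^fJ$ together with a positive integer $k$ witnessing the SFT property, one has that $\pi(H)=(a_1,\dots,a_n)_A$ is a finitely generated subideal of $I$. For any $a\in I$, the element $(a,f(a))$ lies in $I\bowtie^fJ$, so $(a^k,f(a^k))=(a,f(a))^k\in H$, and applying $\pi$ yields $a^k\in\pi(H)$. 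This shows that $I$ is SFT in $A$.

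For the converse, assume additionally that $J$ is an SFT ideal of $f(A)+J$, and choose witnesses $I'=(a_1,\dots,a_n)_A\subseteq I$ with exponent $k$ and $J'=(b_1,\dots,b_m)_{f(A)+J}\subseteq J$ with exponent $k'$. I would propose the candidate
$$\tilde I':=\bigl((a_1,f(a_1)),\dots,(a_n,f(a_n)),(0,b_1),\dots,(0,b_m)\bigr)_{A\bowtie^fJ},$$
which is a finitely generated subideal of $I\bowtie^fJ$. Given any $(a,f(a)+j)\in I\bowtie^fJ$, I would split it as $x+y$ with $x=(a,f(a))$ and $y=(0,j)$ and expand $(x+y)^{k+k'}=\sum_{i=0}^{k+k'}\binom{k+k'}{i}x^iy^{k+k'-i}$ as in Lemma~\ref{I+J}. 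For indices $i\geq k$, using $a^k\in I'$, one writes $x^i=(a^i,f(a^i))$ as an $A\bowtie^fJ$-combination of the generators $(a_l,f(a_l))$ of $\tilde I'$. For indices $i<k$, one has $k+k'-i>k'$; since $J'$ is an ideal of $f(A)+J$ and $j^{k'}\in J'$, also $j^{k+k'-i}\in J'$, and writing $j^{k+k'-i}=\sum_r d_rb_r$ with $d_r=f(c_r)+e_r\in f(A)+J$ gives $(0,d_rb_r)=(c_r,f(c_r)+e_r)\cdot(0,b_r)\in\tilde I'$, whence $(0,j^{k+k'-i})\in\tilde I'$; multiplying by $x^i\in A\bowtie^fJ$ then keeps the whole summand in $\tilde I'$.

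The main obstacle is the delicate point in the converse that $J'$ is only known to be an ideal of $f(A)+J$, not of $B$, so one cannot pull out $B$-scalars against $(0,b_r)$ to realize arbitrary $B$-multiples inside $A\bowtie^fJ$. The resolution, which is the key observation of the proof, is that every $d\in f(A)+J$ appears as the second coordinate of some $(c,f(c)+e)\in A\bowtie^fJ$, so $d\cdot b_r$ is an honest $A\bowtie^fJ$-multiple of $(0,b_r)$ and therefore belongs to $\tilde I'$. Once this is in place, the binomial expansion closes uniformly for the exponent $N=k+k'$ and yields the SFT property for $I\bowtie^fJ$.
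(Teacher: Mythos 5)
Your proposal is correct and follows essentially the same route as the paper: the forward direction via projecting a finitely generated witness onto the first coordinate, and the converse via the decomposition $(a,f(a)+j)=(a,f(a))+(0,j)$, the combined finitely generated ideal generated by the $(a_l,f(a_l))$ and $(0,b_r)$, and a binomial expansion with exponent $k+k'$ as in Lemma~\ref{I+J}. The one point you elaborate beyond the paper --- that an $f(A)+J$-multiple of $(0,b_r)$ is realized as an honest $A\bowtie^fJ$-multiple via $(c_r,f(c_r)+e_r)\cdot(0,b_r)$ --- is a detail the paper's proof leaves implicit, and it is handled correctly.
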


\begin{proof} For a ring $R$, we denote by $L:=<a_i\mid i=1,..,n>_R$
the finitely generated ideal of $R$ generated by $a_1,a_2,..,a_n$.

Assume that $I\bowtie^f J$ is an $SFT$ ideal of $A\bowtie^f J$.
Then, there exists finitely generated ideal $K:=<(i_l,
f(i_l)+j_l)\mid l=1,..,n>_{A\bowtie^fJ}$ of $A\bowtie^fJ$ and  a
positive integer $k$ such that $K\subset I\bowtie^fJ$ and $x^k\in
K$ for each $x\in I\bowtie^fJ$. Set $I'=<i_l\mid l=1,..,n>_A$. It
is clear that $I'\subset I$ and let $i\in I$. Since $(i,f(i))\in
I\bowtie^fJ$, we get  $(i^k,f(i^k))=(i,f(i))^k\in K$. Thus,
$i^k\in I'$. Hence, $I$ is an $SFT$ ideal of $A$.

Assume that  $J$ is an $SFT$ ideal of $f(A)+J$. Then there exists
a finitely generated ideal $J'=<j_l\mid k=1,..,m>_{f(A)+J}$ of
$f(A)+J$ and a positive integer $k$ such that $j^k\in J'$ for each
$j\in J$. Set $\overline{J'}:=<(0,j_l)\mid
i=1,..,m>_{A\bowtie^fJ}$. On the other hand, $I$ is an $SFT$ ideal
of $A$. Then, there exists a finitely generated ideal $I'=<i_l\mid
l=1,..,n>_A$ of $A$ and a positive integer $k'$ such that
$I'\subset I$ and $i^{k'}\in I'$ for each $i\in I$. Set
$\overline{I'}^f:=<(i_l,f(i_l))\mid l=1,..,n>_{A\bowtie^fJ}$.
Clearly, $K:=\overline{I'}^f+\overline{J'}$ is a finitely
generated ideal of $A\bowtie^fJ$ and $K\subset I\bowtie^fJ$.
Moreover, for each $(i,f(i)+j)\in A\bowtie^fJ$,
$(i,f(i)+j)=(i,f(i))+(0,j)$ and $(i,f(i))^k\in \overline{I'}^f$
since $i^k\in I'$ and $(0,j)^{k'}\in \overline{J'}$ since
$j^{k'}\in J'$. Hence, as in the proof of Lemma \ref{I+J}, we can
prove that $(a,f(a)+j)^{k+k'}\in K$. Consequently, $I\bowtie^fJ$
is an $SFT$ ideal of $A\bowtie^fJ$.
\end{proof}

%%%%%%%%%%%%%%%%%%%%%%%%%%%%%%%%%%%%%%%%%%%%%%%%%%%%%%%%%%%%%%%%%%%%%%
%%%%%%%%%%%%%%%%%%%%%%%%%%%%%%%%%%%%%%%%%%%%%%%%%%%%%%%%%%%%%%%%%%%%%%

{\noindent\sl Proof of Theorem \ref{theorem 2}.~~} Assume that
$A\bowtie^fJ$ is an $SFT$ ring. By \cite[Proposition 5.1
(3)]{AFF1}, the rings $A$ and $f(A)+J$ are homomorphic images of
$A\bowtie^fJ$. Then, using Lemma \ref{homo image}, they are $SFT$
rings.

Conversely, for each prime ideals $P$ and $Q$ of $A$ and $B$
respectively, set $P'^f:=P\bowtie^fJ:=\{(p,f(p)+j)\mid p\in P,
j\in J\}$ and $\overline{Q}^f:=\{(a,f(a)+j)\mid a\in A, j\in J,
f(a)+j\in Q\}$. Let $P$ be a prime ideal of $A$. Then, by
\cite[Proposition 2.6]{AFF2}, $P'^f$ is a prime ideal of
$R\bowtie^fJ$. Hence, by Lemma \ref{I bowtie J}, it is an $SFT$
ideal of $A\bowtie^fJ$. Let $\overline{Q}^f$ be a prime ideal of
$A\bowtie^fJ$, then $Q_{0}=\overline{Q\bigcap (f(A)+J)}$ is an
ideal of $(f(A)+J)/J$. Hence, there exists a finitely generated
ideal $Q_{0}'=<\overline{(a_{i},f(a_{i})+j_{i})} \mid
i=1,..,n>_{(f(A)+J)/J}$ of $(f(A)+J)/J$ and a positive integer
$k_{0}$ such that $Q_{0}'\subset Q_{0}$ and $x^{k_{0}}\in Q_{0}'$
for each $x\in Q_{0}$.  Set $L_{0}=<(a_{i},f(a_{i})+j_{i})\mid
i=1,..,n>_{A\bowtie^fJ}$. Then
  $I=f^{-1}(J)\bigcap P_{A}(\overline{Q}^f):=\{a \in A  \mid f(a) \in J \ ; \ \exists j \in J  \mid f(a) + j \in Q \ \}$  is an ideal of $A$,
  and so there exists a finitely generated ideal $I'=<a_{i} \mid i=n+1,..,m>$ of $A$, and a positive integer $k_{1}$ such that $I'\subset I$ and
  $x^{k_{1}}\in I'$ for each $x\in I$. Set $L_{1}=
<(a_{i},f(a_{i})+j_{i})\mid i=n+1,..,m>_{A\bowtie^fJ}$.\\ Or
$Q_{1}=Q\bigcap J$ is an ideal of $f(A)+J$. Since $f(A)+J$ is an
$SFT$ ring, then there exists a finitely generated ideal
$Q_{1}'=<j_{i} \mid i=m+1,..,l>_{f(A)+J}$ of $f(A)+J$ and a
positive integer $k_{2}$ such that $Q_{1}'\subset Q_{1}$  and
$x^{k_{2}} \in Q_{1}'$ for each $x\in Q_{1}$. Set
$L_{2}=<(0,j_{i})\mid i=m+1,..,l>_{A\bowtie^{f}J}$ and
$L=L_{0}+L_{1}+L_{2}$. Let $(a,f(a)+j) \in \overline{Q}^f$, then $
\displaystyle \overline{(f(a)+j)}^{k_0}=\sum_{i=1}^{m}
\overline{(f(a_{i})+j_{i})(f(b_{i})+j'_{i})}$.

Set $\beta =\displaystyle (f(a)+j)^{k_0}  -  \sum_{i=1}^{m} (f(a_{i})+j_{i})(f(b_{i})+j \ '_{i}) \in J.$ Then

$\displaystyle f(a^{k_0}- \sum_{i=1}^{m} a_{i}b_{i}) \in J$. Hence
$\displaystyle \alpha =a^{k_0}- \sum_{i=1}^{m} a_{i}b_{i} \in
f^{-1}(J)$. Therefore,
\begin{eqnarray*} \displaystyle (a,f(a)+j)^{k_0}&=&(\alpha+ \sum_{i=1}^{m} a_{i}b_{i}  ,   \beta + \sum_{i=1}^{m} (f(a_{i})+j_{i})(f(b_{i})+j'_{i}))\\
 &=&\sum_{i=1}^{m}(a_{i},f(a_{i})+j_{i})(b_{i},f(b_{i})+j \ '_{i})+(\alpha , \beta).
\end{eqnarray*}
Since $(a,f(a)+j)^{k_0} \in \overline{Q}^f $, then $C_1=\displaystyle \sum_{i=1}^{m}(a_{i},f(a_{i})+j_{i})(b_{i},f(b_{i})+j \ '_{i}) \in \overline{Q}^f $.
Consequently, $(\alpha, \beta) \in\overline{Q}^f$. Therefore, $(\alpha,\beta)=(\alpha,f(\alpha)+e)$ such that $e \in J$ and $f(\alpha)+e \in Q$.\\
Then $\alpha \in I$ and $\alpha^{k_1} =\displaystyle
\sum_{i=n+1}^{m} a_{i}a_{i} \ '$. Thus,
\begin{eqnarray*} (\alpha, \beta)^{k_1} &=&  (\alpha, f(\alpha) +e)^{k_1} =(\alpha^{k_1}, (f(\alpha) +e)^{k_1})= (\alpha^{k_1},
f(\alpha)^{k_1} +e \ '') \\ &=& (\displaystyle \sum_{i=n+1}^{m}
a_{i}a_{i} \ ',\displaystyle \sum_{i=n+1}^{m} f(a_{i})f(a_{i} \
')+e \ '') \\ &=&
(\displaystyle \sum_{i=n+1}^{m} a_{i}a_{i} \ ', \sum_{i=n+1}^{m} (f(a_{i})+j_{i})f(a_{i} \ ')+e \ ')\\ &=&  \left[\sum_{i=n+1}^{m} (a_{i},f(a_{i})+j_{i})(a_{i} \ ', f(a_{i} \ ')\right]+(0,e \ '). \end{eqnarray*}\\
Since $(\alpha,\beta) \in \overline{Q}^f \ ; \ C_2=\displaystyle \sum_{i=n+1}^{m} (a_{i},f(a_{i})+j_{i})(a_{i} \ ', f(a_{i} \ ')\} \in \overline{Q}^f$.
Then, $(0,e \ ') \in \overline{Q}^f$ and $e \ ' \in Q_{1}$. Therefore, $e' \ ^{k_2}=\displaystyle \sum_{i=m+1}^{f} (f(b_{i})+e_i)j_i$. Hence, $(0,e \ ')^{k_2}=\displaystyle \sum_{i=m+1}^{f} (b_{i},f(b_{i})+e_i)(0,j_i) \in L_2 .$\\
Consequently,\begin{eqnarray*} (a, f(a)+j)^{k_{0}+k_{1}+k_{2}} &=&
\left[(a, f(a)+j)^{k_{0}}\right]^{k_{1}+k_{2}}
\\ &=& \left[\sum_{i=1}^{m} (f(a_{i})+j_{i})(f(b_{i})+j \
'_{i})+(\alpha,\beta)\right]^{k_{1}+k_{2}}
 \\ &=&
\left[ \left[ C_1+(\alpha,\beta) \right]^{k_{1}}\right]^{k_{2}}\\ &=& \left[ \sum_{t=0}^{k_1} \left(%
\begin{array}{c}
  t \\
  k_1 \\
\end{array}%
\right) (C_1)^t(\alpha,\beta)^{k_1-t} \right]^{k_2}\\ &=&   \left[\sum_{t=0}^{k_1-1} \left(%
\begin{array}{c}
  t \\
  k_1 \\
\end{array}%
\right) (C_1)^t(\alpha,\beta)^{k_1-t}+(\alpha,\beta)^{k_1}\right]^{k_2} \\ &=& \left[\sum_{t=0}^{k_1-1} \left(%
\begin{array}{c}
  t \\
  k_1 \\
\end{array}%
\right) (C_1)^t(\alpha,\beta)^{k_1-t}+C_2 +(0,e \ ' )\right]^{k_2}\\ &=& \left[\sum_{v=0}^{k_2-1}\left(%
\begin{array}{c}
  v \\
  k_2 \\
\end{array}%
\right)\left[\sum_{t=0}^{k_1-1} \left(%
\begin{array}{c}
  t \\
  k_1 \\
\end{array}%
\right) (C_1)^t(\alpha,\beta)^{k_1-t}+C_2\right]^{v} (0,e \ ' )^{k_2-v}\right]\\
& &+(0,e \ ')^{k_2}. \end{eqnarray*}\\
But $\left[\sum_{t=0}^{k_1-1} \left(%
\begin{array}{c}
  t \\
  k_1 \\
\end{array}%
\right) (C_1)^t(\alpha,\beta)^{k_1-t}+C_2\right]^{v} (0,e \ '
)^{k_2-v}$ $\in L_0+L_1$, and $(0,e \ ')^{k_2} \ \in L_2$.  Hence
$(a, f(a)+j)^{k_{0}+k_{1}+k{2}} \ \in L_0+L_1+L_2$ and so
$\overline{Q}^f$ is an $SFT$ ideal of $A\bowtie^fJ$. Consequently,
$A\bowtie^fJ$ is an $SFT$ ring.
  \hbox{$\sqcup$}\llap{\hbox{$\sqcap$}}\medskip

\bigskip

The following two Corollaries is an immediate consequence of
Theorem~\ref{theorem 2} and Lemma~\ref{homo image}. \\

%%%%%%%%%%%%%%%%%%%%%%%%%%%%%%%%%%%%%%%%%%%%%%%%%%%%%%%%%%%%%%%%%%%%%%
%%%%%%%%%%%%%%%%%%%%%%%%%%%%%%%%%%%%%%%%%%%%%%%%%%%%%%%%%%%%%%%%%%%%%%
\begin{corollary}\label{Cor3}
Let $A$ be a ring, $I$ be an ideal of $A$, $J$ be an ideal of $B
:=A/I$ and let $f:A\rightarrow B (=A/I) $ be the canonical
homomorphism. Then, $A\bowtie^fJ$ is an $SFT$ ring if and only if
so is $A$.
\end{corollary}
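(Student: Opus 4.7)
The plan is to read this as a direct application of Theorem~\ref{theorem 2}, combined with the fact that the canonical map $f:A\to A/I$ is surjective. By Theorem~\ref{theorem 2}, the ring $A\bowtie^fJ$ is $SFT$ if and only if both $A$ and $f(A)+J$ are $SFT$, so everything reduces to identifying $f(A)+J$ in this particular setup.

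First I would observe that since $f$ is the canonical surjection, $f(A)=B=A/I$. As $J$ is an ideal of $B$, we have $J\subseteq f(A)$, so $f(A)+J=f(A)=A/I$. Thus Theorem~\ref{theorem 2} takes the simplified form: $A\bowtie^fJ$ is $SFT$ if and only if $A$ and $A/I$ are both $SFT$.

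For the forward direction, if $A\bowtie^fJ$ is $SFT$, Theorem~\ref{theorem 2} immediately yields that $A$ is $SFT$ (we do not even need the information on $A/I$). For the converse, suppose $A$ is $SFT$; then Lemma~\ref{homo image} applied to the proper ideal $I$ gives that $A/I$ is $SFT$, so both $A$ and $f(A)+J=A/I$ are $SFT$, and Theorem~\ref{theorem 2} delivers the conclusion that $A\bowtie^fJ$ is $SFT$.

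There is no real obstacle here; the only subtlety is the trivial verification that $f(A)+J=A/I$ under the surjectivity of $f$, which collapses the two conditions of Theorem~\ref{theorem 2} down to a single condition on $A$ (via Lemma~\ref{homo image}).
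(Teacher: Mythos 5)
Your proposal is correct and is exactly the argument the paper intends: the paper states this corollary as an immediate consequence of Theorem~\ref{theorem 2} and Lemma~\ref{homo image}, and your identification $f(A)+J=A/I$ (using surjectivity of the canonical map) together with Lemma~\ref{homo image} for the converse direction is precisely that reduction.
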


%%%%%%%%%%%%%%%%%%%%%%%%%%%%%%%%%%%%%%%%%%%%%%%%%%%%%%%%%%%%%%%%%%%%%%
%%%%%%%%%%%%%%%%%%%%%%%%%%%%%%%%%%%%%%%%%%%%%%%%%%%%%%%%%%%%%%%%%%%%%%
\begin{corollary}\label{Cor4}
Let $A$ be a ring and $I$ be an ideal of $A$. Then, $A \bowtie I$
is an $SFT$ ring if and only if so is $A$.
\end{corollary}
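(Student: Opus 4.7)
The plan is to derive Corollary~\ref{Cor4} as an immediate specialization of Theorem~\ref{theorem 2}. By definition, the amalgamated duplication $A \bowtie I$ coincides with $A \bowtie^{f} I$ upon choosing $B = A$, $J = I$, and $f = \id_A$ the identity ring homomorphism. In this setting one computes $f(A) + J = A + I = A$, so the hypothesis ``$A$ and $f(A)+J$ are both $SFT$'' of Theorem~\ref{theorem 2} collapses into the single condition that $A$ itself be $SFT$.

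Applying Theorem~\ref{theorem 2} therefore yields at once the desired equivalence: $A \bowtie I = A \bowtie^{\id_A} I$ is an $SFT$ ring if and only if $A$ is an $SFT$ ring. If one wishes to highlight the role of Lemma~\ref{homo image}, the forward (``only if'') direction can alternatively be seen from the fact that $A$ is a homomorphic image of $A \bowtie I$ via the projection $(a, a+i) \mapsto a$ onto the first coordinate, so Lemma~\ref{homo image} forces $A$ to be $SFT$ whenever $A \bowtie I$ is; the reverse direction still appeals to Theorem~\ref{theorem 2} as above.

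No genuine obstacle arises in this argument: the corollary is a purely formal instantiation of the main theorem, and all the substantive technical work has already been carried out in Lemma~\ref{I bowtie J} and in the proof of Theorem~\ref{theorem 2}. The only thing one must verify explicitly is the identification $A \bowtie I \cong A \bowtie^{\id_A} I$ and the trivial equality $f(A)+J = A$, both of which are immediate from the definitions.
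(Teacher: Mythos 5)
Your proposal is correct and matches the paper's intent exactly: the paper gives no separate proof, stating only that the corollary is an immediate consequence of Theorem~\ref{theorem 2} (and Lemma~\ref{homo image}), which is precisely your specialization $B=A$, $J=I$, $f=\id_A$, $f(A)+J=A$. Nothing further is needed.
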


%%%%%%%%%%%%%%%%%%%%%%%%%%%%%%%%%%%%%%%%%%%%%%%%%%%%%%%%%%%%%%%%%%%%%%
%%%%%%%%%%%%%%%%%%%%%%%%%%%%%%%%%%%%%%%%%%%%%%%%%%%%%%%%%%%%%%%%%%%%%%
Noetherian rings are both $SFT$ and coherent rings. In \cite[Page
344]{Bakkari},
Bakkari gives examples of non-coherent $SFT$-rings. Now, we are able to give new examples of non-coherent $SFT$ rings.\\

%%%%%%%%%%%%%%%%%%%%%%%%%%%%%%%%%%%%%%%%%%%%%%%%%%%%%%%%%%%%%%%%%%%%%%
%%%%%%%%%%%%%%%%%%%%%%%%%%%%%%%%%%%%%%%%%%%%%%%%%%%%%%%%%%%%%%%%%%%%%%
 \begin{example}
  Let $A$ be a non-coherent SFT ring, $I$ be an ideal of $A$, $J$ be an ideal of $B
:=A/I$ and let $f:A\rightarrow B (=A/I) $ be the canonical
homomorphism. Then:
\begin{enumerate}
    \item $A\bowtie^fJ$ is an SFT ring.
    \item $A\bowtie^fJ$ is not coherent.
\end{enumerate}
\end{example}

 \begin{proof} 1) By Corollary~\ref{Cor3} since $A$ is an $SFT$ ring. \\
 2) $A\bowtie^fJ$ is not coherent by \cite[Theorem 4.1.5]{G} since $A$ is a module retract of $A\bowtie^fJ$ and $A$ is not
coherent.
\end{proof}

%%%%%%%%%%%%%%%%%%%%%%%%%%%%%%%%%%%%%%%%%%%%%%%%%%%%%%%%%%%%%%%%%%%%%%
%%%%%%%%%%%%%%%%%%%%%%%%%%%%%%%%%%%%%%%%%%%%%%%%%%%%%%%%%%%%%%%%%%%%%%

  \begin{example}
  Let $A$ be a non-coherent SFT ring and $I$ be an ideal of $A$. Then:
\begin{enumerate}
    \item $A\bowtie I$ is an SFT ring.
    \item $A\bowtie I$ is not coherent.
\end{enumerate}
\end{example}

 \begin{proof} 1) By Corollary~\ref{Cor4} since $A$ is an $SFT$ ring. \\
 2) $A\bowtie I$ is not coherent by \cite[Theorem 4.1.5]{G} since $A$ is a module retract of $A\bowtie I$ and $A$ is not
coherent.
\end{proof}

%%%%%%%%%%%%%%%%%%%%%%%%%%%%%%%%%%%%%%%%%%%%%%%%%%%%%%%%%%%%%%%%%%%%%%
%%%%%%%%%%%%%%%%%%%%%%%%%%%%%%%%%%%%%%%%%%%%%%%%%%%%%%%%%%%%%%%%%%%%%%

  \begin{example}
  Let $A$ be a non-coherent SFT ring, $E$ an $A$-module, $B:=A \propto E$ be the trivial ring extension of $A$ by $E$, $f:A\rightarrow B$ be the canonical
homomorphism ($f(a) =(a,0)$) and set $J :=0 \propto E$. Then:
\begin{enumerate}
    \item $A\bowtie^fJ$ is an SFT ring.
    \item $A\bowtie^fJ$ is not coherent.
\end{enumerate}
\end{example}

\begin{proof} 1) By Theorem~\ref{theorem 2} and \cite[Theoreme 3.1]{Bakkari} since
 $f(A) + J = B (=A \propto E)$. \\
 2) $A\bowtie^fJ$ is not coherent by \cite[Theorem 4.1.5]{G} since $A$ is a module retract of $A\bowtie^fJ$ and $A$ is not
coherent.
\end{proof}
%%%%%%%%%%%%%%%%%%%%%%%%%%%%%%%%%%%%%%%%%%%%%%%%%%%%%%%%%%%%%%%%%%%%%%%%%%%%%%%%%%%%%%%%%%%%%%%%%%%%%%%%%%%%%%%%%%%%%%%%%%

\bigskip

\bibliographystyle{amsplain}

\end{document}